\renewcommand{\BibLabel}{%
    \Hy@raisedlink{\hyper@anchorstart{cite.\CurrentBib}\hyper@anchorend}%
    [\thebib]%
}
\renewcommand{\BibLabel}{%
    \Hy@raisedlink{\hyper@anchorstart{cite.\CurrentBib}\hyper@anchorend}%
    [\thebib]%
}
\renewcommand{\PrintDOI}[1]{%
  \href{http://dx.doi.org/#1}{\texttt{DOI:#1}}%
}
\renewcommand{\eprint}[1]{#1}
\newtheorem{thm}{Theorem}[section]
\newtheorem{lem}[thm]{Lemma}
\newtheorem{prp}[thm]{Proposition}
\newtheorem{dfn}[thm]{Definition}
\newtheorem{cor}[thm]{Corollary}
\newtheorem{rmk}[thm]{Remark}
\newtheorem{exa}[thm]{Example}
\newcommand{\Z}{{\mathbb{Z}}}
\newcommand{\R}{{\mathbb{R}}}
\newcommand{\T}{{\mathbb{T}}}
\newcommand{\N}{{\mathbb{Z}}_{\ge 0}}
\newcommand{\Sp}{\mathrm{Sp}} 
\newcommand{\Gl}{\mathrm{GL}} 
\newcommand{\pf}{{\operatorname{pf}}} 
\newcommand{\ev}{{\operatorname{ev}}} 
\newcommand{\deta}{{\operatorname{det}}} 
\newcommand{\Tr}{{\operatorname{Tr}}} 
\newcommand{\K}{\mathrm{K}} 
\newcommand{\HH}{\mathrm{H}} 
\def\mydoubleq#1{``#1''}
\numberwithin{equation}{section}
\title{\normalfont\spacedallcaps{some remarks on $\K_0$ of noncommutative tori}} 
\author{\spacedlowsmallcaps{Sayan Chakraborty*}} 
\date{\vspace{-3ex}}
\begin{document}
\renewcommand{\sectionmark}[1]{\markright{\spacedlowsmallcaps{#1}}} 
\renewcommand{\subsectionmark}[1]{\markright{\thesubsection~#1}} 
\lehead{\mbox{\llap{\small\thepage\kern10em\color{halfgray} \vline}\color{halfgray}\hspace{0.5em}\rightmark\hfil}} 
\pagestyle{scrheadings} 
\maketitle 
\vspace{.3cm}
\begin{abstract}
Using Rieffel's construction of projective modules over higher dimensional noncommutative tori, we construct projective modules over some continuous field of C*-algebras  whose fibres are noncommutative tori. Using a result of Echterhoff et al., our construction gives generators of $\K_0$ for all noncommutative tori.  
\end{abstract}
\vspace{1cm}
\hrule

{\let\thefootnote\relax\footnotetext{*\texttt{sayan2008@gmail.com}; Stat-Math unit, Indian Statistical Institute, 203 Barrackpore Trunk Road, Kolkata 700 108, India.}}


\section{Introduction}

The noncommutative  2-tori are central objects in noncommutative geometry. They are amongst the most studied examples in the field. Their higher dimensional analogue on the other hand is not so well studied.  
Recall 
that the $n$-dimensional noncommutative torus $A_{\theta}$ is
the universal C*-algebra  generated by unitaries $U_1$, $U_2$,  $U_3$, $\cdots$, $U_n$
subject to the relations
\begin{equation}
	U_k U_j = e^ {2 \pi i \theta_{jk} } U_j U_k
\end{equation}
for $j, k = 1, 2, 3, \cdots, n$, where $\theta:=(\theta_{jk})$ is a real skew-symmetric  $n \times n$ matrix. We call a skew-symmetric matrix totally irrational if all the entries above the diagonal are  rationally linearly independent and not rational. 

Rieffel in \cite{Rie88}  had constructed projective modules over n-dimensional noncommutative tori while Elliott in \cite{Ell84}  computed the $\K$-theory of these algebras. Elliott showed that the $\K$-theory of n-dimen\-sional noncommutative tori is independent of the parameter $\theta$ and also he computed the image of $\K_0(A_\theta)$ under the canonical trace of $A_\theta$.  It follows from Elliott's computations that for totally irrational  $\theta$ the canonical trace  on $A_\theta$ is injective as a map from $\K_0(A_\theta)$ to $\R$. So using the description of the image of the trace and Rieffel's (\cite{Rie88}) computations of traces of projective modules, we can compute a basis of $\K_0$ for $A_\theta$ in the case where $\theta$ is totally irrational. But the question remains how to compute the generators when $\theta$ is not totally irrational. This article answers this question and we compute the generators for a general $\theta$. 

Based on the results of Rieffel and Elliott,  Echterhoff, L\"uck, Phillips and Walters constructed in \cite{ELPW10} a continuous field of projective modules over the parameter space (a certain space which consists of $2 \times 2$ skew symme\-tric matrices) of 2-dimensional noncommutative tori. Along with other results, the authors gave a basis of $\K_0(A_\theta)$ using  the range of the trace of \-2-dimensional noncommutative tori. They also showed how this basis could be extended to provide elements of a basis of $\K$-theory of some crossed products $A_\theta\rtimes F$, where $F$ is a finite cyclic group which is compatible with $\theta$. 

We take a similar approach to \cite{ELPW10} to provide explicit bases for $\K_0(A_\theta)$ for any higher dimensional noncommutative torus.  Some computations of a basis of $\K_0(A_\theta)$ for higher dimensional noncommutative tori have already been appeared before, using the Powers--Rieffel projection picture. See \cite[Chapter 4]{PSB16} for an overview of it and how this is important in the study of topological insulators. Also, our results provide a base for a higher dimensional analysis of the results by Echterhoff, Lück, Phillips and Walters and the $\K$-theory of the crossed products $A_\theta \rtimes F,$ for a finite group $F$.

This article is organised as follows:

In Section 2 we recall some basics of groupoids, twisted groupoid C*-algebras and their $\K$-theory. Though, in \cite{ELPW10}, the authors didn't work with groupoids, the groupoid approach turns out to be useful to understand the construction of their continuous field.

In the third section we construct the continuous field of projective mo\-dules which plays the major role to compute the generators of $\K_0(A_\theta).$

In the fourth section we show how our construction of continuous field could be used to give explicit generators of the $\K_0$-group of noncommutative tori and we work out the three and four dimensional cases in detail.

Notation: In the rest of the article $e(x)$  denotes the number $e^{2\pi i x}$ .


\section{Twisted groupoid algebras and their K-theory}
We assume that the reader is familiar with the basic notions of locally compact Hausdorff groupoids.  We refer to the book of Renault \cite{Ren80}  for a basic course on groupoids and twisted representations of those. To introduce the notations we recall the definition of a 2-cocycle on a groupoid.
\begin{dfn} Let $G$ be a locally compact Hausdorff groupoid.  A continuous map $\omega: G^{(2)} \to \mathbb{T}$ is called a 2-cocycle if 
$$ \omega(x, y) \omega(xy, z)= \omega(x, yz) \omega(y, z),
\label{cocycle}
$$
whenever $(x, y), (y, z) \in G^{(2)}$ and 
$$ \omega(x, d(x)) = 1 = \omega(t(x), x)\label{units}, $$
for any $x \in G$, where $G^{(2)}$ denotes the composable pairs of $G$ and $d$, $t$ denote the domain and the range map respectively.
\end{dfn}

\begin{dfn}Let $G$ be a locally compact Hausdorff groupoid with a Haar system.
The C*-algebra $C^*(G,\omega)$  is defined to be the enveloping C*-algebra of the $\omega$-twisted left regular representation of the groupoid $G$.
\end{dfn}

\begin{exa}

Let $G$ be the group (hence groupoid) $\Z^n$. For each $n \times n$ real skew-symmetric matrix $\theta$, we can  construct a 2-cocycle on this group by defining $\omega_\theta(x, y) = e((x\cdot \theta y)/2 )$. The corresponding group C*-algebra $C^*(G, \omega_\theta)$ is easi\-ly seen to be isomorphic to the $n$-dimensional noncommutative torus as defined in the beginning of this article. 

  \end{exa}
  
  Two 2-cocycles $\omega_1$ and $\omega_2$ of a discrete group $G$ are called cohomologus if there exists a map $f: G \rightarrow \T$ such that 
  
  $$\omega_1(x,y) = f(x)f(y)\overline {f(xy)}\omega_2(x,y), \hspace{.2 cm} x ,y\in G.  $$
  
  This defines an equivalence relation on the set of 2-cocycles of $G$. Let us denote  the equivalence classes of 2-cocycles of $G$ by $\HH^2(G,\T)$. It is well known that $\HH^2(\Z^n,\T) = \T^{\frac{n(n-1)}{2}}$.

Let $[a,b]$ be a closed interval. Let us consider the transformation groupoid $\Z^n \times [a,b]$ for the trivial $\Z^n$ action on $[a,b]$. Suppose $\omega_r$ is a continuous family (with respect to $r\in [a,b]$) of 2-cocycles on the group $\Z^n$ (note that the continuity makes sense in this context). We define the following 2-cocycle $\omega$ on the groupoid $\Z^n \times [a,b]$:  $\omega ((x,r),(y,r)) = \omega_r(x,y)$. 
Then we have the following evaluation map 
$$ \ev_r : C^*(\Z^n \times [a,b],\omega) \rightarrow C^*(\Z^n,\omega_r),\hspace{.2 cm} r\in [a,b], $$ $ \ev_r$ sends $f$ to $f'$ where $f'(x)=f(x,r), f \in C_c(\Z^n \times [a,b],\omega)$.   
The following theorem is due to Echterhoff et al. \cite{ELPW10}.

\begin{thm}\label{elpwmain}Let $[p_1],[p_2],\cdots , [p_m] \in \K_0(C^*(\Z^n \times [a,b],\omega))$. 
	Then the follo\-wing are equivalent:
	
	\begin{enumerate}
		\item $[p_1],[p_2],\cdots , [p_m]$ form a basis of $\K_0(C^*(\Z^n \times [a,b],\omega))$.  
		\item For some $r \in [a,b]$, the evaluated classes $[\ev_r(p_1)],[\ev_r(p_2)], \cdots ,[\ev_r(p_m)]$ form a basis of $\K_0(C^*(\Z^n,\omega_r))$. 
	  \item For every $r \in [a,b]$, the evaluated classes $[\ev_r(p_1)],[\ev_r(p_2)], \cdots , [\ev_r(p_m)]$ form a basis of $\K_0(C^*(\Z^n,\omega_r))$. 
\end{enumerate}
\end{thm}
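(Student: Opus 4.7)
My strategy is to reduce the three-way equivalence to the single assertion that for every $r\in[a,b]$ the evaluation map
\[
(ev_r)_*\colon \K_0\bigl(C^*(\Z^n\times[a,b],\omega)\bigr)\longrightarrow \K_0\bigl(C^*(\Z^n,\omega_r)\bigr)
\]
is an isomorphism of abelian groups. Once this is granted, statements (1), (2) and (3) each amount to the claim that the given $m$-tuple is a basis of the same free abelian group $\Z^{2^{n-1}}$ (Elliott's computation gives the rank and its independence of the parameter), so (3)$\Rightarrow$(2) is trivial and the remaining implications follow by transporting bases along the isomorphisms $(ev_r)_*$.

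To prove that $(ev_r)_*$ is an isomorphism I would study the short exact sequence
\[
0\longrightarrow I_r\longrightarrow C^*(\Z^n\times[a,b],\omega)\stackrel{ev_r}{\longrightarrow} C^*(\Z^n,\omega_r)\longrightarrow 0,
\]
where $I_r$ is the ideal of sections vanishing at $r$, itself realized as the twisted groupoid C*-algebra of $\Z^n\times([a,b]\setminus\{r\})$ with the restricted cocycle. The six-term exact sequence then reduces the claim to $\K_*(I_r)=0$. Since $[a,b]\setminus\{r\}$ is a disjoint union of at most two half-open intervals, each deformation-retracting onto its endpoint, in the \emph{untwisted} situation $I_r$ would be a direct sum of cones over $C^*(\Z^n,\omega_r)$ and therefore K-contractible. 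My hope would be to reach the genuinely twisted case by subdividing $[a,b]$ into short subintervals on which $\omega$ is nearly constant, and then patching the local vanishing statements together using Mayer--Vietoris, comparing at each step to the trivial continuous field $C([a,b])\otimes C^*(\Z^n,\omega_{r_0})$.

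The main obstacle is precisely this last comparison: the continuous field in question is genuinely twisted, not a tensor product, so the vanishing of $\K_*(I_r)$ does not come for free. The cleanest route I see is to exploit that the classifying map $r\mapsto[\omega_r]$ takes values in the Lie group $H^2(\Z^n,\T)\cong \T^{n(n-1)/2}$ and has contractible domain $[a,b]$, hence is null-homotopic. Promoting this null-homotopy of cocycles to a K-theoretic equivalence between our continuous field and a trivial one — via a homotopy argument in KK-theory, combined with Elliott's theorem that $\K_0$ does not depend on the parameter and with Rieffel's deformation constructions — is the technical heart of the argument, and where essentially all of the work should lie; the rest of the proof is then bookkeeping with the six-term sequence.
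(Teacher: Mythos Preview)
The paper itself does not prove this; it simply cites Remark~2.3 of \cite{ELPW}. Your reduction is the right one: once each $(ev_r)_*$ is known to be an isomorphism on $\K_0$, the equivalence of (1)--(3) is immediate, and that isomorphism is exactly what \cite{ELPW} establishes. The gap lies in how you propose to prove that isomorphism. Passing to $\K_*(I_r)=0$ via the six-term sequence is a reformulation, not a reduction, and both of your suggested attacks on it are circular as stated. In the subdivision approach, ``nearly constant'' has no $\K$-theoretic content: on any subinterval, however short, the cocycle still varies, and comparing to the constant field there is the original problem on a smaller interval. In the null-homotopy approach, the homotopy from the family $(\omega_r)_r$ to a constant family produces a cocycle on $\Z^n\times[a,b]\times[0,1]$, and to extract anything from it you need evaluation at $s=0$ and $s=1$ to be $\K$-isomorphisms --- which is the same assertion over a new interval.

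The argument actually used in \cite{ELPW} breaks the circle by inducting on $n$ rather than on the length of the interval. Writing
\[
C^*(\Z^n\times[a,b],\omega)\;\cong\;C^*(\Z^{n-1}\times[a,b],\omega')\rtimes_\alpha\Z
\]
as a crossed product by $\Z$, one compares its Pimsner--Voiculescu exact sequence with the Pimsner--Voiculescu sequence of the fibre at $r$ and applies the five lemma; the inductive hypothesis handles the $(n-1)$-dimensional terms, and the base case is the contractibility of $[a,b]$. This is essentially Elliott's proof that $\K_*(A_\theta)$ is independent of $\theta$, run uniformly over the parameter interval, and it needs neither $\KK$-theory nor Rieffel's deformation machinery.
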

\begin{proof}
	See Remark 2.3 of \cite{ELPW10}.
\end{proof}


\section{Projective modules over bundles of noncommutative tori}\label{sec:projective_bundle}

 As the pfaffian of an even dimensional skew-symmetric matrix will play a central role in the construction of our continuous field, we recall the definition of the pfaffian.

\begin{dfn}
 The pfaffian of a $2p \times 2p$   skew-symmetric matrix $A:=(a_{ij})$ is a polynomial, denoted by $\pf(A)$, in the entries $a_{ij}$ ($i<j$) such that $\pf(A)^2=\det A$ and $\pf(J''_0)=1$, where $J''_0$ is the block diagonal matrix constructed from $p$ identical $2 \times 2$ blocks of the form
 
$$J'_0=\begin{pmatrix} 0 & 1 \\ -1 & 0 \end{pmatrix}.$$

So 
   $$ J''_0 =
    \left(\begin{array}{ccccc}
   J'_0 & & & & \\
    &J'_0 & & & \\
    & & \ddots & &\\
   & & & & J'_0\\
   \end{array}\right).
$$
\end{dfn}

It can be shown that $\pf(A)$ always exists and is unique. Also it is well known that for any $2p \times 2p$ matrix $B$, $\pf(BAB^t) = \deta(B)\pf(A).$  To give some examples, 
$$\pf\left( \begin{array}{ccc}
0 & \theta_{12} \\
  -\theta_{12}  & 0\\
  \end{array} \right) = \theta_{12},$$ $$\pf\left( \begin{array}{cccc}
0 & \theta_{12} & \theta_{13} & \theta_{14}\\
 -\theta_{12} & 0  & \theta_{23} & \theta_{24}\\
-\theta_{13} & -\theta_{23} & 0 & \theta_{34} \\
-\theta_{14} & -\theta_{24} & -\theta_{34} & 0 \\
\end{array} \right) = \theta_{12}\theta_{34}-\theta_{13}\theta_{24}+\theta_{14}\theta_{23}.$$
More generally, if $n= 2m$, for $$\theta:= 
   \left( \begin{array}{cccccccc}
0 & \theta_{12} & \cdots  &  &  & \cdots &\theta_{1n}  \\
-\theta_{12} & \ddots &\ddots  & &   &  &\theta_{2n} \\
\vdots & \ddots  &   &  & &  &\\
 &  & &  &  &  &\\
&  &   & &  & \ddots  & \vdots\\
  -\theta_{1(n-1)} &  &  & &   \ddots  & \ddots &\theta_{(n-1)n} \\
-\theta_{1n} & \cdots &  &  &  \cdots &  -\theta_{(n-1)n} & 0\\
\end{array} \right),
$$
the pfaffian of $\theta$ is given by $\sum_{\xi}(-1)^{|\xi|}\prod^{m}_{s=1}\theta_{\xi(2s-1)\xi(2s)},$ where the sum is taken over all elements $\xi$ of
the permutation group $\mathcal{S}_{n}$ such that $\xi(2s-1)<\xi(2s)$ for all $1\le s\le m$
and $\xi(1)<\xi(3)<\cdots<\xi(2m-1)$. Let us denote the set of all such elements of $\mathcal{S}_{n}$ by $\Pi.$ So $\pf(\theta) = \sum_{\xi \in \Pi}(-1)^{|\xi|}\prod^{m}_{s=1}\theta_{\xi(2s-1)\xi(2s)}.$ Note that the identity permutation $id \in \Pi.$ We claim that in the set $\Pi \smallsetminus \{id\},$ the number of even and odd permutations are the same. Indeed, if $\xi \in \Pi \smallsetminus \{id\},$  if we denote the permutation by pairs $\{(\xi(1), \xi(2)), (\xi(3), \xi(4)), \cdots ,(\xi(2m-1), \xi(2m))\},$ choose the least $i$ such that $(\xi(i),\xi(i)+1)$ is not a pair. This is always possible since $\xi \neq id.$ Note that there is some $j$ such that $\xi(j)=\xi(i)+1$ as $\xi(i+1) \geq \xi(i)+1.$ Now swapping the partners (the paired elements are partners of each other) of $\xi(i)$ and $\xi(j)$ we get a new permutation which is of different signature than of $\xi.$ This defines a bijection between odd permutations and even permutations in $\Pi \smallsetminus \{id\}.$ Since the signature of $id$ is 1, we have $\sum_{\xi \in \Pi}(-1)^{|\xi|}=1$

Also define $\Sp(p,\R) :=\{ W \in \Gl_{2p}(\R): W^tJ''_0W=J''_0\}.$ We need the following lemma for our main construction.

\begin{lem}

The space of $2p \times 2p$ skew-symmetric matrices with positive pfaffian is homeomorphic to $\Gl^+_{2p}(\R)/\Sp(p,\R)$
	
\end{lem}

\begin{proof}
	For any invertible $2p \times 2p$ skew-symmetric matrix $\theta$, there exists an invertible matrix $T$ such that $TJ''_0T^t = \theta$. $T$ is unique up to right multiplication by elements of $\Sp(p,\R)$. Now  $\pf(\theta) = \pf(TJ''_0T^t) =\deta(T)\pf(J''_0)=\deta(T).$ Since $\Gl^+_{2p}(\R)$ consists of all invertible matrices with positive determinant, our claim follows.
\end{proof}

Let us fix a number $n$. Also let $n=2p + q$ for some $p$ and $q \in \N$. Recall that a skew-symmetric matrix is totally irrational if the upper off diagonal entries are  rationally linearly independent and not rational. Let us fix  any totally irrational $n \times n$ skew-symmetric matrix $\psi
  := \left( \begin{array}{ccc}
\psi_{11} & \psi_{12}\\
  \psi_{21}  & \psi_{22}\\
  \end{array} \right)$ with the top upper  $2p \times 2p$  left corner $\psi_{11}$  having positive pfaffian. Also let $\theta
  := \left( \begin{array}{ccc}
\theta_{11} & \theta_{12}\\
  \theta_{21}  & \theta_{22}\\
  \end{array} \right)$  be any $n \times n$ skew-symmetric matrix such that it has similar properties as $\psi$, i.e $\theta_{11}$, the left $2p \times 2p$ corner, has positive pfaffian.     

 Let $I := [0,1]$ and choose a path $\gamma$ parametrised by  $I$ from $\psi$ to $\theta$ in the set of $n \times n$ skew-symmetric matrices,  where $\psi_{11}$ and $\theta_{11}$ are connected by a path $\gamma_{11}$ in the space of $2p \times 2p$ skew-symmetric matrices with positive pfaffian. Since the latter is path connected (by the above lemma), the choice is always possible. The matrices $\psi_{12}$ and $\theta_{12}$, $\psi_{21}$ and $\theta_{21}$, $\psi_{22}$ and $\theta_{22}$ are connected by straight line homotopies, which will be denoted by $\gamma_{12}, \gamma_{21}$ and  $\gamma_{22}$, respectively.
 
  For $r \in I$, we have $$
 \gamma(r) = \left( \begin{array}{cc}
\gamma(r)_{11} & \gamma(r)_{12}\\
\gamma(r)_{21}  & \gamma(r)_{22}
\end{array} \right) 
$$ (notice that $\gamma(r)_{11}$ is the $2p \times 2p$ block).
Let $\Z^n \times  I$ be the transformation groupoid with the action of  $\Z^n$ on $I$ being trivial. We will construct a 2-cocycle on this groupoid. Fibre-wise, the C*-algebra of this twisted groupoid algebra will be just the n-dimensional noncommutative tori with parameter $\gamma(r)$. Define $\Omega((x,r),(y,r)) = e((x\cdot \gamma (r) y)/2)$.
We will use the same approach of Rieffel \cite{Rie88} to construct 
finitely generated projective
$C^* ( \Z^n \times I, \Omega )$-modules,
which represent a suitable class ($\mathcal{E}_r$) of projective modules over $C^* ( \Z^n, \Omega_r ) := C^* ( \Z^n, \gamma(r) ) $, for each $r \in I$.
To do this, we recall some constructions by Rieffel and Schwartz from \cite{Li04}.
Define a new cocycle $\Omega^{-1}$ on the groupoid by setting
$\Omega^{-1}((x,r),(y,r))  = e(( \gamma(r)'x \cdot y)/2 )$, where $$
\gamma(r)'    =  
\left( \begin{array}{cc}
\gamma(r)_{11}^{-1}  &  -\gamma(r)_{11}^{-1}\gamma(r)_{12}\\
\gamma(r)_{21}\gamma(r)_{11}^{-1}  &  
\gamma(r)_{22} - \gamma(r)_{21}\gamma(r)_{11}^{-1}\gamma(r)_{12}
\end{array} \right).         
$$ 

Set $\mathcal{A} = C^* ( \Z^n \times I, \Omega )$ and
$\mathcal{B} = C^* ( \Z^n \times I, \Omega^{-1} )$.
Then the fibre $\mathcal{B}_r$ of $\mathcal{B}$, at $r \in I$,
is the noncommutative torus 
$C^* (\Z^n,-\gamma(r)')$.  Let $M$ be the space $\R^p \times \Z^q$, $G := M \times \widehat{M}$ and $\langle \cdot , \cdot \rangle$ the natural pairing between  $M$ and  $\widehat{M}$. 
Consider the space $\mathcal{E}^\infty := \mathcal{S}(M,I)$
consisting of all complex functions on $M \times I$
which are smooth and rapidly decreasing in the first variable
and continuous in the second variable
in each derivative
of the first variable.
Denote the set of rapidly decreasing
$C (I )$-valued functions on $\Z^n$ by $\mathcal{A}^\infty = \mathcal{S} (\Z^n \times I, \Omega)$,
viewed as a (dense) subalgebra of $C^* (\Z^n \times I, \Omega),$
and  by $\mathcal{B}^\infty = \mathcal{S} (\Z^n \times I, \Omega^{-1})$, 
viewed as a (dense) subalgebra
of $C^* (\Z^n \times I, \Omega^{-1})$, which is constructed similarly.

Following Li \cite{Li04}, we can prove:

\begin{thm}\label{moritaprojective}
$\mathcal{E}^\infty$ may be given an  $\mathcal{A}^\infty$-$\mathcal{B}^\infty$  Morita equivalence bimodule structure, which can be extended to a strong Morita equivalence between $\mathcal{A}$ and $\mathcal{B}$. 

\end{thm}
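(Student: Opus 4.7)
My plan is to mimic Rieffel's construction of Morita equivalence bimodules over noncommutative tori, as refined by Li in \cite{Li03}, performing it in a fashion parametrized by $r \in I$. The key observation is that the hypothesis $\pf(\gamma(r)_{11}) > 0$ for every $r \in I$ makes Li's fiberwise construction available throughout the family, so the problem reduces to choosing the auxiliary data continuously in $r$ and verifying that fullness propagates from the fibers to $\mathcal{A}$ and $\mathcal{B}$ themselves.

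The first step is to construct a continuous family of embeddings $\iota_r \colon \Z^n \hookrightarrow G = M \times \hat{M}$ whose associated bilinear form recovers $\gamma(r)$. Because Rieffel--Li's embedding is given by an explicit formula polynomial in the entries of $\gamma(r)$ and in $\gamma(r)_{11}^{-1}$ (where invertibility is guaranteed by positivity of the pfaffian), the assignment $r \mapsto \iota_r$ is automatically continuous. The complementary embedding $\iota_r'$, realizing the dual cocycle $-\gamma(r)'$, is continuous for the same reason. Using $\iota_r$ and $\iota_r'$ I would define left $\mathcal{A}^\infty$- and right $\mathcal{B}^\infty$-actions on $\mathcal{E}^\infty = \mathcal{S}(M, I)$ by Rieffel's Heisenberg translation--modulation formulas, and define the two inner products by Rieffel's sum/integral formulas, with the parameter $r$ simply carried along. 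The hypothesis that elements of $\mathcal{E}^\infty$ are continuous in $r$ in every derivative of the first variable ensures that the resulting functions on $\Z^n \times I$ land in $\mathcal{A}^\infty$ and $\mathcal{B}^\infty$.

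All algebraic identities required of a Morita bimodule --- associativity of the two actions, positivity and conjugate symmetry of the inner products, and the compatibility ${}_{\mathcal{A}}\langle \xi, \eta\rangle \cdot \zeta = \xi \cdot \langle \eta, \zeta\rangle_{\mathcal{B}}$ --- hold pointwise in $r$ and hence follow immediately from Li's fiberwise theorem. The main obstacle I anticipate is \emph{fullness} at the level of the $C^*$-completions: the ranges of the two inner products must be dense in $\mathcal{A}$ and $\mathcal{B}$, not merely in each fiber. For this I would exploit the $C(I)$-algebra structure of $\mathcal{A}$ and $\mathcal{B}$, whose fibers at $r$ are $C^*(\Z^n, \Omega_r)$ and $C^*(\Z^n, \Omega_r^{-1})$: the fiberwise density given by Rieffel--Li, combined with a partition-of-unity argument on the compact interval $I$ and the continuity of the inner products in $r$, produces dense approximations in $\mathcal{A}$ and $\mathcal{B}$ themselves. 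Completing $\mathcal{E}^\infty$ to a right Hilbert $\mathcal{B}$-module with compatible left $\mathcal{A}$-action then yields the claimed strong Morita equivalence.
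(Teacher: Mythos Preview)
Your approach is essentially the same as the paper's: both parametrize Li's construction by $r\in I$, write down the Heisenberg translation--modulation actions and inner products via a continuous family of embedding maps (the paper calls them $T_r$ and $S_r$, your $\iota_r$ and $\iota_r'$), and then invoke Li's Proposition~2.2 fiberwise together with continuity in $r$. Your discussion of fullness via a partition-of-unity argument over $I$ is actually more explicit than the paper's, which simply cites Li.

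One point of your justification is not quite right, however. You assert that the embedding is ``given by an explicit formula polynomial in the entries of $\gamma(r)$ and in $\gamma(r)_{11}^{-1}$'', making continuity automatic. In fact the $2p\times 2p$ block $T(r)_{11}$ of the embedding must satisfy $T(r)_{11}^{t}J_0\,T(r)_{11}=\gamma(r)_{11}$, a square-root-type condition admitting no polynomial solution formula in the entries of $\gamma(r)_{11}$. The existence of a \emph{continuous} choice $r\mapsto T(r)_{11}$ follows instead from the fact that $GL^{+}_{2p}(\R)\to GL^{+}_{2p}(\R)/Sp(2p,\R)$ is a locally trivial principal bundle, which therefore admits a section over the contractible base $I$. (The paper simply asserts the existence of such a continuous family without comment.) Once $T(r)_{11}$ is fixed, the remaining blocks of $T_r$ and all of $S_r$ are indeed rational in the entries of $\gamma(r)$ and $T(r)_{11}$, so your continuity conclusion stands---only the stated reason needs adjustment.
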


\begin{proof}  
Following \cite{Li04}, let
$$
 T(r) = \left( \begin{array}{cc}
T(r)_{11} & 0\\
0  & I_q\\
T(r)_{31}  & T(r)_{32}
\end{array} \right), 
$$ 
where $T(r)_{11}$  is a continuous family (with respect to $r$)  of invertible matrices such that $T(r)_{11}^tJ_0T(r)_{11} = \gamma(r)_{11}$, $J_0
  := \left( \begin{array}{ccc}
0 & I_p\\
  -I_p  & 0\\
  \end{array} \right)$, $T(r)_{31} = \gamma(r)_{21}$ and $T(r)_{32}$ is the matrix obtained from $\gamma(r)_{22}$ by replacing the lower diagonal entries by zero. 
  
 We also define $$
S(r) = \left( \begin{array}{cc}
J_0(T(r)_{11}^t)^{-1} & -J_0(T(r)_{11}^t)^{-1}T(r)_{31}^t\\
0  & I_q\\
0  & T(r)^t_{32}
\end{array} \right).
$$ 
Let 
$$
 J = \left( \begin{array}{ccc}
J_0 & 0 & 0\\
0 & 0 & I_q\\
0 & -I_q & 0\\
\end{array} \right) 
$$ and $J'$ be the matrix obtained from $J$ by replacing each negative entry of it by zero. Let us now denote the matrix $T(r)$ by $T_r$ and $S(r)$ by $S_r$. Note that $T_r$ can be thought as map from $\widehat{\R^{n}}$ to $\R^p \times \widehat{\R^{p}}\times \R^q \times \widehat{\R^{q}}$ and when restricted to $\Z^n$, it lands in $\R^p \times \widehat{\R^{p}}\times \Z^q \times \widehat{\R^{q}}$. So $T_r$ maps $\Z^n$ to $G$. This is an example of an embedding map discussed in \cite[2.1]{Li04}. Let $P'$ and $P''$ be the canonical projections of $G$ to $M$ and $\widehat{M}$ respectively and
 $T'_r$ , $T_r''$ be the maps  $P'\circ T_r$ and $P''\circ T_r$, respectively. Similarly we define $S'_r$ and $S''_r$ as $P'\circ S_r$ and $P''\circ S_r$, respectively. Then the following formulas define an $\mathcal{A}^\infty$-$\mathcal{B}^\infty$ bimodule structure on $\mathcal{E}^\infty$:

\begin{equation} \label{eq:proj_mod_T}
(f U_l^\theta)(x,r)= e(-T_r(l)\cdot J^{\prime} T_r(l)/2)\langle x, T_r^{\prime \prime}(l) \rangle f(x-T_r^{\prime}(l),r), \end{equation} 

\begin{equation} \label{eq:proj_mod_inner}
\langle f,g \rangle_{\mathcal{A}^\infty}(l)= e(-T_r(l)\cdot J^{\prime} T_r(l)/2)\int_{G} \langle x,-T_r^{\prime \prime}(l) \rangle g(x+T_r^{\prime}(l),r)\bar{f}(x,r) dx, \end{equation}

\begin{equation} \label{eq:proj_mod_Tprime}
(V_l^\theta  f)(x,r)= e(-S_r(l) \cdot J^{\prime} S_r(l)/2)\langle x, -S_r^{\prime \prime}(l) \rangle f(x+S_r^{\prime}(l),r), \end{equation} 

\begin{equation} \label{eq:proj_mod_inner_Tprime}
{}_{\mathcal{B}^\infty}\langle f,g \rangle(l)= e(S_r(l)\cdot J^{\prime} S_r(l)/2)\int_{G} \langle x,S_r^{\prime \prime}(l) \rangle \bar{g}(x+S_r^{\prime}(l),r)f(x,r) dx, \end{equation}
where $l \in \Z^n$.

Using the proposition 2.2 of \cite{Li04} and the continuity of the families $T_r$ and $S_r$, the result follows. Completing the space $\mathcal{E}^\infty$ with respect to the defined inner products, we get an $\mathcal{A}$-$\mathcal{B}$ Morita equivalence bimodule. 
\end{proof}

 If we denote the completion of $\mathcal{E}^\infty$ with respect to the inner product by $\mathcal{E}$,   the fibre-wise  Morita equivalence $\mathcal{E}_r$ is just the Morita equivalence bet\-ween $\mathcal{A}_{\gamma(r)}$ and $\mathcal{A}_{-\gamma(r)'}$ which Rieffel \cite{Rie88} had considered. 
 Since both $\mathcal{B}$ and $\mathcal{A}$ are unital, $\mathcal{E}$ is a finitely generated projective
$\mathcal{A}$-module with respect to the given action of $\mathcal{A}$ on $\mathcal{E}$ (see the argument before Proposition 4.6 in \cite{ELPW10}). The trace of the module $\mathcal{E}_r$, which was originally computed by Rieffel \cite[Proposition 4.3, page 289]{Rie88}, is exactly the absolute value of the pfaffian of the upper left $2p \times 2p$ corner of the matrix $\gamma(r)$. Indeed, \cite[Proposition 4.3, page 289]{Rie88} says that trace of $\mathcal{E}_r$ is $|\deta ~\widetilde{T(r)}|$, where 
$$
\widetilde{T(r)}= \left( \begin{array}{cc}
T(r)_{11} & 0\\
0  & I_q\\
\end{array} \right),
$$ the relation $T(r)_{11}^tJ_0T(r)_{11} = \gamma(r)_{11}$ and the fact $\deta (J_0) =1$ give the claim.


\section{Generators of the $\K_0$ groups of noncommutative tori}

From Elliott's computation of the image of the traces for noncommutative tori and the fact that the trace $\Tr : \K_0(A_\theta) \rightarrow \R$ is injective for totally irrational $\theta$, we can use Theorem \ref{moritaprojective} and Theorem \ref{elpwmain} to compute explicit generators of $\K_0(A_\theta)$, for all $\theta$. 

We recall the following facts which will play a key role. 
\begin{thm}[Elliott]\label{elliott_image_of_trace}
Let $\theta$ be a skew-symmetric real $n\times n$ matrix. Then $\Tr(\K_0(A_\theta))$  is the range of the “exterior exponential”	
 $$\operatorname{exp}(\theta):\Lambda^{\operatorname{even}}\Z^n\rightarrow \R.$$
\end{thm}
 \noindent We refer to (\cite[Theorem 3.1]{Ell84}) for the definition of “exterior exponential” and the proof of the above theorem. The range of the “exterior exponential” is well known (see for example \cite[beginning of the page 836]{EL08})) and is given below as a corollary of the above theorem:
 \begin{cor}\label{imageoftrace}
	\sloppy $\Tr(\K_0(A_{\theta}))$ is the subgroup of $\R$
generated by $1$ and the numbers $\sum_{\xi}(-1)^{|\xi|}\prod^{m}_{s=1}\theta_{j_{\xi(2s-1)}j_{\xi(2s)}}$ for
$1\le j_1<j_2< {\cdots}<j_{2m}\le n$, where the sum is taken over all elements $\xi$ of
the permutation group $\mathcal{S}_{2m}$ such that $\xi(2s-1)<\xi(2s)$ for all $1\le s\le m$
and $\xi(1)<\xi(3)<\cdots<\xi(2m-1)$.\end{cor}
\noindent This immediately gives us the following:
\begin{cor}\label{traceinjective}
	$\Tr$ is injective for totally irrational $\theta$.
\end{cor}

Before going to explicit computations, we shall say some words about the pfaffian of an $n \times n$   skew-symmetric matrix $A:=(a_{ij})$. Let $l$ be an integer such that $1 \leq l \leq \frac{n}{2}$.
\begin{dfn} 
A  $2l$-pfaffian minor (or just pfaffian minor) $M^A_{2l}$ of a skew-symmetric matrix $A$ is the pfaffian of a submatrix of $A$ consisting of rows and columns indexed by $i_1, i_2, ..., i_{2l}$ for some $i_1 < i_2 < ... < i_{2l}$. 	
\end{dfn}
\noindent Note that the number of  $2l$-pfaffian minors is  ${n \choose 2l}$ and the number of all pfaffian minors is $2^{n-1}-1$.

 Let $$\theta = 
   \left( \begin{array}{cccccccc}
0 & \theta_{12} & \cdots  &  &  & \cdots &\theta_{1n}  \\
-\theta_{12} & \ddots &\ddots  & &   &  &\theta_{2n} \\
\vdots & \ddots  &   &  & &  &\\
 &  & &  &  &  &\\
&  &   & &  & \ddots  & \vdots\\
  -\theta_{1(n-1)} &  &  & &   \ddots  & \ddots &\theta_{(n-1)n} \\
-\theta_{1n} & \cdots &  &  &  \cdots &  -\theta_{(n-1)n} & 0\\
\end{array} \right). 
$$ 
Let us assume that all pfaffian minors of $\theta$ are positive. We will now see that for each pfaffian minor of $\theta$ we can construct a projective module over $A_\theta$ such that the trace of which is exactly the pfaffian minor.  Fix $1 \leq l \leq \frac{n}{2}$. For $i_1 < i_2 < ... < i_{2l}$, let us denote the corresponding pfaffian minor also by $M^{\theta}_{i_1,i_2, ... ,i_{2l}}$. Choose a permutation $\sigma \in \mathcal{S}_{n}$ such that $\sigma(1) = i_1,  \sigma(2) = i_2,\cdots, \sigma(2l) = i_{2l}.$ If $U_1, U_2, \cdots, U_n$ are generators of $A_\theta$, there exists an $n \times n$ skew-symmetric matrix, denoted by  $\sigma(\theta)$, such that $U_{\sigma(1)},  U_{\sigma(2)}, \cdots, U_{\sigma(n)}$ are generators of $A_{\sigma(\theta)}$ and $A_{\sigma(\theta)} \cong A_\theta.$ Note that since $i_1 < i_2 < ... < i_{2l},$ the upper left $2l \times 2l$ block has the following form

 $$\sigma(\theta)\vert_{2l}:= 
   \left( \begin{array}{cccccccc}
0 & \theta_{i_1i_2} & \cdots  &  &  & \cdots &\theta_{i_1i_{2l}}  \\
-\theta_{i_1i_2} & \ddots &\ddots  & &   &  &\theta_{i_2i_{2l}} \\
\vdots & \ddots  &   &  & &  &\\
 &  & &  &  &  &\\
&  &   & &  & \ddots  & \vdots\\
  -\theta_{i_1i_{(2l-1)}} &  &  & &   \ddots  & \ddots &\theta_{i_{(2l-1)}i_{2l}} \\
-\theta_{i_{1}i_{2l}} & \cdots &  &  &  \cdots &  -\theta_{i_{(2l-1)}i_{2l}} & 0\\
\end{array} \right) .
$$

Now consider the projective module constructed as completion of $\mathcal{S}(\R^l \times \Z^{n-2l})$ over $A_{\sigma(\theta)}$ and denote it by  $\mathcal{E}_{\sigma(\theta)\vert_{2l}}$. The trace of this module is the pfaffian of $\sigma(\theta)\vert_{2l},$ which is exactly $\sum_{\xi \in \Pi}(-1)^{|\xi|}\prod^{l}_{s=1}\theta_{i_{\xi(2s-1)}i_{\xi(2s)}}.$  Varying $l$, we get $2^{n-1}-1$ projective modules whose traces are given by the numbers which appeared in Corollary \ref{imageoftrace}. We call these $2^{n-1}-1$ elements the \emph{fundamental projective modules}. Now we have the following reformulation of Elliot's result (Theorem \ref{elliott_image_of_trace}).

\begin{thm}\label{thm:generator_totally_irrational}
	Let $\theta$ be a skew-symmetric real $n\times n$ matrix which is totally irrational. Also assume that all the pfaffian minors of $\theta$ are positive. Then the $\K$-theory classes of the fundamental projective modules along with $[1]$ generate $\K_0(A_\theta).$ 
	\end{thm}
	
\begin{proof}
	Since $\theta$ is totally irrational, $\Tr$ is injective. From the above discussion we have that the image of the fundamental projective modules under the trace coincide with the numbers in Corollary \ref{imageoftrace}. Hence it follows that the fundamental projective modules along with the trivial class $[1]$ generate the $\K_0$-group of $A_\theta.$
\end{proof}

\subsection{Explicit generators of $\K_0(A_\theta)$ for general $\theta$}

We will first need the following proposition to give bases of the $\K_0$ groups of general noncommutative tori. Consider the $n \times n$ skew-symmetric matrix $Z$ whose entries above the diagonal are all 1: 
  $$Z = 
   \left( \begin{array}{cccccccc}
0 & 1 & \cdots  &  &  & \cdots &1  \\
-1 & \ddots &\ddots  & &   &  &\vdots \\
\vdots & \ddots  &   &  & &  &\\
 &  & &  &  &  &\\
&  &   & &  & \ddots  & \vdots\\
  \vdots &  &  & &   \ddots  & \ddots & 1\\
-1 & \cdots &  &  &  \cdots &  -1 & 0\\
\end{array} \right) .
$$
Note that since $\sum_{\xi \in \Pi}(-1)^{|\xi|}=1,$ all pfaffian minors of $Z$ are 1. 

\begin{prp}\label{prp-postivematrix}
	For any skew-symmetric $n \times n$ matrix $A:=(a_{ij})$, there exists some positive integer $t$, such that all pfaffian minors of $A + tZ$ are positive.
\end{prp}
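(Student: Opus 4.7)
The plan is to view each pfaffian minor of $A + tZ$ as a polynomial in $t$ and to control its leading coefficient. Since the pfaffian of a $2l \times 2l$ skew-symmetric matrix is homogeneous of degree $l$ in the matrix entries, for any fixed index set $I = \{i_1 < \cdots < i_{2l}\}$ the minor $\pf(A_I + tZ_I)$ is a polynomial in $t$ of degree at most $l$, whose coefficient of $t^l$ is $\pf(Z_I)$. Because $I$ is taken in the induced order, the submatrix $Z_I$ again has all entries equal to $+1$ strictly above the diagonal, i.e.\ it has the same shape as the $2l \times 2l$ version of $Z$ (call it $Z_{2l}$). So the problem reduces to showing $\pf(Z_{2l}) > 0$ for every admissible $l$.

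I would compute $\pf(Z_{2l})$ by induction on $l$ using the row-expansion formula
\[
\pf(Z_{2l}) \;=\; \sum_{j=2}^{2l} (-1)^{j}\,(Z_{2l})_{1j}\,\pf\bigl((Z_{2l})_{\widehat{1}\,\widehat{j}}\bigr).
\]
Since every entry $(Z_{2l})_{1j}$ equals $1$ and since the complementary submatrices $(Z_{2l})_{\widehat{1}\,\widehat{j}}$ are themselves of the form $Z_{2l-2}$, this collapses to $\pf(Z_{2l}) = \bigl(\sum_{j=2}^{2l}(-1)^j\bigr)\,\pf(Z_{2l-2}) = \pf(Z_{2l-2})$, because the signed sum of the $2l-1$ terms (with $l$ even indices and $l-1$ odd indices) equals $1$. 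Together with the base case $\pf(Z_2) = 1$, induction yields $\pf(Z_{2l}) = 1$.

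With the leading coefficient pinned down as $1$, every polynomial $\pf(A_I + tZ_I)$ has the form $t^l + (\text{lower-order terms in } t)$ and is therefore eventually positive as $t \to \infty$. There are only finitely many index sets to consider (in fact $2^{n-1} - 1$ of them), so taking the finite maximum of the thresholds produces a single positive integer $t$ that makes every pfaffian minor of $A + tZ$ positive simultaneously. The only real work is the identity $\pf(Z_{2l}) = 1$, which is routine via the row expansion above; everything else is a formal pigeonhole on the finitely many polynomial conditions.
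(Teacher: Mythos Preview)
Your proof is correct and follows essentially the same approach as the paper: expand each pfaffian minor of $A+tZ$ as a polynomial in $t$ with leading term $t^l$, and then use that there are only finitely many minors to pick one large $t$ that works for all of them simultaneously. In fact you give more detail than the paper, which simply asserts the form $t^l + (\text{lower order})$; your inductive computation $\pf(Z_{2l})=1$ via the row expansion is exactly what justifies that the leading coefficient is $1$.
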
 

\begin{proof}

Fix $l$ with $1 \leq l \leq \frac{n}{2}.$ It is easily seen, using $\sum_{\xi \in \Pi}(-1)^{|\xi|}=1,$ that $M^{A + tZ}_{2l}$ is a polynomial in $t$ and 
$$ M^{A + tZ}_{2l} = t^l + t^{l-1}A_{l-1}+t^{l-2}A_{l-2}+\cdots +t^{1}A_{1} +A_0,$$
for polynomials 	$A_{l-1}, A_{l-2}, \cdots A_{0}$ in entries of $A:=(a_{ij})$. Since $A$ is fixed, we can choose an integer $t$ such that $ t^l $ dominates the other entries of $M^{A + tZ}_{2l}$. 

Since we have only a finite number of pfaffian minors, we can also choose an integer $t$ such that  $M^{A + tZ}_{2l} > 0$ for all $l$. 
\end{proof}

With the above results in hand, we can describe the generators of the $\K_0$-group of general $n$-dimensional non-commutative tori. Let us start with a general $n \times n$ skew-symmetric matrix $\theta$. Using the above proposition, there exists a positive integer $t$, such that all pfaffian minors of $\theta + tZ$ are positive. Note that $A_{\theta + tZ}$ and $A_\theta$ define the same noncommutative torus.

\begin{thm}\label{thm:main2}
	The $\K$-theory classes of the fundamental projective modules along with $[1]$ generate $\K_0(A_{\theta + tZ})$ and hence $\K_0(A_\theta).$
\end{thm}

\begin{proof}
Since $A_{\theta + tZ}$ and $A_{\theta}$ represent the same noncommutative torus, we denote $\theta + tZ$ by $\theta$ by abuse of notation. Let us also fix a totally irrational skew-symmetric matrix  $\theta'$ whose all pfaffian minors are positive. Now for $i_1 < i_2 < ... < i_{2l}$, we have the pfaffian minor  $M^{\theta}_{i_1,i_2, ... ,i_{2l}}$. The corresponding fundamental projective module  was constructed over $A_{\sigma(\theta)} \cong A_\theta,$ where $\sigma \in \mathcal{S}_{n}$ such that $\sigma(1) = i_1,  \sigma(2) = i_2,\cdots, \sigma(2l) = i_{2l}.$  For the pfaffian minor $M^{\theta}_{i_1,i_2, ... ,i_{2l}},$ we find a path over $[0,1]$ (as discussed in Section \ref{sec:projective_bundle}) between the corresponding $\sigma(\theta')$ to $\sigma(\theta).$ So we get a projective module $\mathcal{E}_{\sigma}$ over the corresponding twisted groupoid algebra as in Section \ref{sec:projective_bundle} (using Theorem \ref{moritaprojective}). Now, using Theorem \ref{thm:generator_totally_irrational}, $\ev_0(\mathcal{E}_{\sigma})= \mathcal{E}_{\sigma(\theta')\vert_{2l}},$ for different  $l$'s,  along with the trivial element generate $\K_0$ of $A_{\theta'}$ as $\theta'$ is totally irrational and all pfaffian minors are positive. Hence using Theorem \ref{elpwmain}, $\ev_1(\mathcal{E}_{\sigma})=\mathcal{E}_{\sigma(\theta)\vert_{2l}},$ for different  $l$'s, along with trivial element generate $\K_0$ of $A_\theta.$ 
\end{proof}

\begin{rmk}
	From the proof of Proposition \ref{prp-postivematrix} it follows that, there exists some positive integer $t$ such that for all $\omega_\theta \in \HH^2(\Z^n,\T),$ so that $\theta$ has entries in $(0,1],$ $\theta + tZ$ has positive pfaffian minors. In particular, it means that each class in $\HH^2(\Z^n,\T)$ can be connected to any other class in  $\HH^2(\Z^n,\T)$ using the same path construction as in Section \ref{sec:projective_bundle}.
\end{rmk}

For the convenience of the readers, we further spell out how Theorem \ref{thm:main2} explicitly works for the 2, 3 and 4-dimensional cases. 

\subsection{The 2-dimensional case}

Let $$
 \theta = \left( \begin{array}{cc}
0 & \theta_{12}\\ 
 -\theta_{12} & 0 \\
\end{array} \right) .
$$
Denote $A_\theta$ by $A_{\theta_{12}}.$ We can assume $\pf(\theta)=\theta_{12}$ is positive as otherwise we can add a large positive integer while keeping the algebras isomorphic (special case of Proposition \ref{prp-postivematrix}). Now we have
$$
\Tr (\K_0(A_{\theta_{12}})) = \Z + \theta_{12}\Z.
$$

We consider the projective $A_{\theta_{12}}$-module $\mathcal{E}^{\theta_{12}}:=\overline{ \mathcal{S}(\R)}$ constructed as in Theorem \ref{moritaprojective} at the $r=0$ fibre  for the choice of $M = \R$. The trace of this module is $\theta_{12}$ (see the discussion at the end of Section 3.2). If $\theta$ is totally irrational which means $\theta_{12}$ is irrational (so that the trace is injective), $\K$-theory class of $\mathcal{E}^{\theta_{12}}$ along with the trivial element generate $\K_0(A_{\theta_{12}}).$ For general $\theta,$ we use our description of the continuous field corresponding to $M = \R,$ and Theorem \ref{elpwmain}~(as we know the generators for irrational fibres) to conclude that $\mathcal{E}^{\theta_{12}}$ and the trivial element generate $\K_0(A_\theta).$

\begin{rmk} 
We also point out that when $\theta_{12}$ is irrational, the $\K$-theory class of $\mathcal{E}^{\theta_{12}}$ coincides with the $\K$-theory class of the Rieffel projection as the traces of both agree.  
\end{rmk}

\subsection{The 3-dimensional case}

Let $$
 \theta = \left( \begin{array}{ccc}
0 & \theta_{12} & \theta_{13}\\ 
 -\theta_{12} & 0  & \theta_{23}\\
-\theta_{13} & -\theta_{23} & 0\\
\end{array} \right) .
$$
Using the above proposition, assume that the 2-pfaffian minors of $A_\theta$, $\pf(M^{\theta}_{ij})$, are positive (indeed, $A_{\theta + tZ}$ is isomorphic to $A_\theta$ for any integer $t$), where 
$$
 M^{\theta}_{ij} = \left( \begin{array}{cc}
0 & \theta_{ij}\\
-\theta_{ij}  & 0\\ 

\end{array} \right), \hspace{.2 cm} \text{ $j > i \ge  1$ }. 
$$  
From Corollary \ref{imageoftrace}, one has
$$
\Tr (\K_0(A_\theta)) = \Z + \theta_{12}\Z + \theta_{13}\Z + \theta_{23}\Z.
$$

We consider the projective $A_\theta$ module $\mathcal{E}^{\theta}_{12}:=\overline{ \mathcal{S}(\R \times \Z)_{12}}$ constructed as in Theorem \ref{moritaprojective} at the $r=0$ fibre  for the choice of $M = \R \times \Z$. The trace of this module is $\theta_{12}$.

Consider the following matrices  $$
 \theta_1 = \left( \begin{array}{ccc}
0 & \theta_{23} & -\theta_{12}\\
 -\theta_{23} & 0  & -\theta_{13}\\
\theta_{12} & \theta_{13} & 0\\
\end{array} \right), 
 $$ $$
  \theta_2 = \left( \begin{array}{ccc}
0 & \theta_{13} & \theta_{12}\\
 -\theta_{13} & 0  & -\theta_{23}\\
-\theta_{12} & \theta_{23} & 0\\
\end{array} \right) .
$$
Note that $A_{\theta}$, $A_{\theta_1}$ and $A_{\theta_2}$ are just \mydoubleq{rotations} of each other and re\-present the same noncommutative tori. Let $\mathcal{E}^{\theta}_{13}:=\overline{ \mathcal{S}(\R \times \Z)_{13}}$ and $\mathcal{E}^{\theta}_{23}:=\overline{ \mathcal{S}(\R \times \Z)_{23}}$ be the projective modules over $A_{\theta_2}$ and $A_{\theta_1}$, respectively, as discussed above. Now, similarly to the previous case, we see that $\Tr(\mathcal{E}^{\theta}_{13})= \theta_{13}$ and $\Tr(\mathcal{E}^{\theta}_{23}) = \theta_{23}$. 

If $\theta$ is totally irrational, using injectivity of the trace we conclude that $\K$-theory classes of $\mathcal{E}^{\theta}_{12}, \mathcal{E}^{\theta}_{13}, \mathcal{E}^{\theta}_{23}$ along with the trivial element generate $\K_0(A_\theta)$.

Using continuous fields using the above modules and using Theorem \ref{elpwmain}, one sees that $\mathcal{E}^{\theta}_{12}, \mathcal{E}^{\theta}_{13}, \mathcal{E}^{\theta}_{23}$ along with the trivial element generate $\K_0(A_\theta)$ for a general $\theta$. 

\subsection{The 4 dimensional case}

Let $$
 \theta = \left( \begin{array}{cccc}
0 & \theta_{12} & \theta_{13} & \theta_{14}\\
 -\theta_{12} & 0  & \theta_{23} & \theta_{24}\\
-\theta_{13} & -\theta_{23} & 0 & \theta_{34} \\
-\theta_{14} & -\theta_{24} & -\theta_{34} & 0 \\
\end{array} \right) .
$$ Without loss of generality (again using Proposition \ref{prp-postivematrix}), we can assume that the pfaffians, $\pf(\theta)$ and $\pf(M^{\theta}_{ij}),$ are positive, where 
$$
 M^{\theta}_{ij} = \left( \begin{array}{cc}
0 & \theta_{ij}\\
-\theta_{ij}  & 0\\ 

\end{array} \right), \hspace{.2 cm} \text{$j > i \ge  1$ }. 
$$ 
Then, similarly to the 3-dimensional case, we get the  six modules $\mathcal{E}^{\theta}_{12}$, $\mathcal{E}^{\theta}_{13}$, $\mathcal{E}^{\theta}_{14}$, $ \mathcal{E}^{\theta}_{23}$, $\mathcal{E}^{\theta}_{24}$, $\mathcal{E}^{\theta}_{34}$. These modules are completions of $\mathcal{S}(\R \times \Z^2)$ for different actions of $A_\theta$. Since $\K_0(A_\theta) = \Z^8$, we need to find another projective module which has trace $\theta_{12}\theta_{34}-\theta_{13}\theta_{24}+\theta_{14}\theta_{23}$ (according to  Corollary \ref{imageoftrace}). This module is the Bott class given by the completion of $\mathcal{S}(\R^2)$ (as in Theorem \ref{moritaprojective} for $M = \R^2$). Denote this module by $\mathcal{E}^{\theta}_{1234}$. Now the pfaffian of $\theta$ is $\Tr(\mathcal{E}^{\theta}_{1234})$ which is $ \theta_{12}\theta_{34}-\theta_{13}\theta_{24}+\theta_{14}\theta_{23}$. Again using corresponding continuous fields and Theorem \ref{elpwmain} (like the 2 and 3-dimensional cases) we conclude that the $\K$-theory classes of $\mathcal{E}^{\theta}_{12}, \mathcal{E}^{\theta}_{13},\mathcal{E}^{\theta}_{14},$  $ \mathcal{E}^{\theta}_{23}, \mathcal{E}^{\theta}_{24}, \mathcal{E}^{\theta}_{34}$ and  $\mathcal{E}^{\theta}_{1234}$ along with the trivial element generate  $\K_0(A_\theta)$ for a general $\theta$. 
\section*{Acknowledgements}

The author wants to thank Siegfried Echterhoff for valuable discussions. This research was  supported by Deutsche Forschungsgemeinschaft (SFB 878, Groups, Geometry and Actions).

\begin{bibdiv}
\begin{biblist}

\bib{ELPW10}{article}{
   author={Echterhoff, Siegfried},
   author={L\"{u}ck, Wolfgang},
   author={Phillips, N. Christopher},
   author={Walters, Samuel},
   title={The structure of crossed products of irrational rotation algebras
   by finite subgroups of $SL_2(\Z)$},
   journal={J. Reine Angew. Math.},
   volume={639},
   date={2010},
   pages={173--221},
   issn={0075-4102},
   review={\MR{2608195}},
   doi={10.1515/CRELLE.2010.015},
}
\bib{Ell84}{article}{
   author={Elliott, G. A.},
   title={On the $K$-theory of the $C^{\ast} $-algebra generated by a
   projective representation of a torsion-free discrete abelian group},
   conference={
      title={Operator algebras and group representations, Vol. I},
      address={Neptun},
      date={1980},
   },
   book={
      series={Monogr. Stud. Math.},
      volume={17},
      publisher={Pitman, Boston, MA},
   },
   date={1984},
   pages={157--184},
   review={\MR{731772}},
}
\bib{EL08}{article}{
   author={Elliott, George A.},
   author={Li, Hanfeng},
   title={Strong Morita equivalence of higher-dimensional noncommutative
   tori. II},
   journal={Math. Ann.},
   volume={341},
   date={2008},
   number={4},
   pages={825--844},
   issn={0025-5831},
   review={\MR{2407328}},
   doi={10.1007/s00208-008-0213-8},
}

\bib{Li04}{article}{
   author={Li, Hanfeng},
   title={Strong Morita equivalence of higher-dimensional noncommutative
   tori},
   journal={J. Reine Angew. Math.},
   volume={576},
   date={2004},
   pages={167--180},
   issn={0075-4102},
   review={\MR{2099203}},
   doi={10.1515/crll.2004.087},
}

\bib{PSB16}{book}{
   author={Prodan, Emil},
   author={Schulz-Baldes, Hermann},
   title={Bulk and boundary invariants for complex topological insulators},
   series={Mathematical Physics Studies},
   note={From $K$-theory to physics},
   publisher={Springer, [Cham]},
   date={2016},
   pages={xxii+204},
   isbn={978-3-319-29350-9},
   isbn={978-3-319-29351-6},
   review={\MR{3468838}},
   doi={10.1007/978-3-319-29351-6},
}

\bib{Ren80}{book}{
   author={Renault, Jean},
   title={A groupoid approach to $C^{\ast} $-algebras},
   series={Lecture Notes in Mathematics},
   volume={793},
   publisher={Springer, Berlin},
   date={1980},
   pages={ii+160},
   isbn={3-540-09977-8},
   review={\MR{584266}},
}

\bib{Rie88}{article}{
   author={Rieffel, Marc A.},
   title={Projective modules over higher-dimensional noncommutative tori},
   journal={Canad. J. Math.},
   volume={40},
   date={1988},
   number={2},
   pages={257--338},
   issn={0008-414X},
   review={\MR{941652}},
   doi={10.4153/CJM-1988-012-9},
}

\end{biblist}
\end{bibdiv}
\end{document}